\newlength{\myhmargin} \setlength{\myhmargin}{1in} \addtolength{\myhmargin}{18pt}
\DeclareMathAlphabet{\mathpzc}{OT1}{pzc}{m}{it}
\DeclareSymbolFont{bbold}{U}{bbold}{m}{n}
\DeclareSymbolFontAlphabet{\mathbbold}{bbold}
\lstdefinelanguage{Sage}[]{Python}
{morekeywords={True,False,sage,singular},
sensitive=true}
\definecolor{dblackcolor}{rgb}{0.0,0.0,0.0}
\definecolor{dbluecolor}{rgb}{.01,.02,0.7}
\definecolor{dredcolor}{rgb}{0.8,0,0}
\definecolor{dgraycolor}{rgb}{0.30,0.3,0.30}
\newcommand{\dblack}{\color{dblackcolor}\bf}
\renewcommand{\emph}[1]{{\dblack{#1}}}
\newcommand{\symgrp}{\mathrm{S}_N}
\newcommand{\spc}{\mathfrak{s}}
\newcommand{\F}{\mathbb{F}}
\newcommand{\cfhat}{\widehat{\mathit{CF}}}
\newcommand{\dhat}{\widehat{\partial}_{\mathit{HF}}}
\newcommand{\hfhat}{\widehat{\mathit{HF}}}
\newcommand{\del}{\partial} 
\newcommand{\CCF}{\widehat{\mathcal{CF}}}
\newcommand{\N}{\textsc{n}}
\newcommand{\pitwo}{\pi_2}
\newcommand{\pitwohat}{\widehat{\pi}_2}
\newcommand{\C}{\mathbf{x}_\xi} 
\newcommand{\chat}{\widehat{c}}
\newcommand{\R}{\mathbb{R}}
\newcommand{\x}{\mathbf{x}}
\newcommand{\y}{\mathbf{y}}
\newcommand{\bsa}{\boldsymbol\alpha}
\newcommand{\bsb}{\boldsymbol\beta}
\newcommand{\Mba}{\Sigma,\bsb,\bsa}
\newcommand{\arcs}{\mathbf{a}} 
\newcommand{\base}{\mathbf{z}}
\newcommand{\ordn}{o} 
\newcommand{\ord}{\mathbf{o}} 
\def\@xfootnote[#1]{%
  \protected@xdef\@thefnmark{#1}%
  \@footnotemark\@footnotetext}
\title[Computing Heegaard Floer invariants]{Computing Heegaard Floer invariants of closed contact 3-manifolds from open books}
\author[Kutluhan]{\c Ca\u gatay Kutluhan}
\author[Mati\'{c}]{Gordana Mati\'{c}}
\author[Van Horn-Morris]{Jeremy Van Horn-Morris}
\author[Wand]{Andy Wand}
\address{Department of Mathematics, University at Buffalo}
\email{kutluhan@buffalo.edu}
\address{Department of Mathematics, University of Georgia}
\email{gordana@math.uga.edu}
\address{Department of Mathematical Sciences, University of Arkansas}
\email{jvhm@uark.edu}
\address{School of Mathematics and Statistics, University of Glasgow}
\email{andy.wand@glasgow.ac.uk}
\newtheorem{theorem}{Theorem}
\newtheorem*{thm}{Theorem}
\newtheorem{lem}[theorem]{Lemma}
\theoremstyle{definition}
\numberwithin{equation}{section}
\begin{document}
\sloppy
\bibliographystyle{amsalpha}

\begin{abstract}
We present two SageMath programs that build on and improve upon Sucharit Sarkar's \verb|hf-hat|. Given an abstract open book and a collection of pairwise disjoint properly embedded arcs on a page of the open book, the first program, \verb|hf-hat-obd|, can be used to analyze the resulting Heegaard diagram, while the second, \verb|hf-hat-obd-nice| computes the hat version of Heegaard Floer homology of the closed oriented 3-manifold described by the Heegaard diagram as long as the latter is nice. We also provide an auxiliary program, \verb|makenice|, that can be used to produce a nice Heegaard diagram out of any abstract open book and a collection of pairwise disjoint properly embedded arcs on a page of the open book. The primary applications of \verb|hf-hat-obd-nice| are to the computation of the Ozsv\'ath--Szab\'o contact invariant and to the detection of finiteness of spectral order, which is a Stein fillability obstruction that is stronger than the vanishing of the Ozsv\'ath--Szab\'o contact invariant.
\end{abstract}

\maketitle

\section{Introduction}
A classical result due to Alexander \cite{Alexander} states that every closed oriented 3-manifold admits an open book decomposition. Once an open book and an arc basis on its page are chosen, we can construct a Heegaard diagram. The latter can then be used to compute the Heegaard Floer homology of the 3-manifold described by that Heegaard diagram. Moreover, the chosen open book supports a contact structure \cite{TW} and an invariant of this contact structure living in Heegaard Floer homology defined by Ozsv\'ath and Szab\'o in \cite{OzsvathSzabo4} is represented by a distinguished tuple of points in that Heegaard diagram \cite{HKM}. Using this description of the contact invariant in Heegaard Floer homology, the authors defined in \cite{KMVHMW1} a refinement of it, called \textit{spectral order}, taking values in the set $\mathbb{N}\cup\{\infty\}$, and showed that 
\begin{thm}{\cite[Corollary 4.8]{KMVHMW1}}
If the contact structure is Stein fillable, then its spectral order is infinite.
\end{thm}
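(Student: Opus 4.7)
The plan is to leverage the Giroux correspondence: if $\xi$ is Stein fillable, then by Giroux's theorem there is an open book decomposition $\obd$ supporting $\xi$ whose monodromy $\mon$ admits a factorization as a product of right-handed (positive) Dehn twists. The strategy is to use this positivity to show that the distinguished contact tuple $\C$ in the Heegaard diagram associated to such $\obd$ (together with an arc basis $\arcs$) cannot be realized as the boundary of any configuration of disks of finite complexity, where ``complexity'' is the quantity measured by spectral order.

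First I would recall from \cite{KMVHMW1} the definition of spectral order: it is defined via a filtration on $\cfhat$ associated to an open book $\obd$, where the $n$-th level involves chains obtained by iterated boundary operations of length at most $n$, and $\C$ has order $\geq n$ when a certain system of cycle equations is solvable up to level $n$. Infinite spectral order then means the contact class is, at every finite level, a cycle in the appropriate sense. I would separate the proof into two parts: a geometric input (monotonicity of spectral order under positive stabilization and under monoidal decomposition of the monodromy) and an inductive combinatorial step.

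The key step is to reduce to the case where $\mon$ is a single positive Dehn twist, or trivial. For such an open book supporting a Stein fillable (indeed Weinstein) contact manifold, one can exhibit the Heegaard diagram explicitly (after a suitable choice of arc basis) and verify by direct inspection that at every finite level $n$ the required chains witnessing $\C$ as a cycle exist, because all relevant domains are forced to have non-negative multiplicities by the positivity of the monodromy. The inductive step then combines two such positive pieces, using naturality of the contact invariant under the cobordism associated to attaching a Weinstein handle (as in Ozsv\'ath--Szab\'o's proof that $\OSz$ is non-zero for Stein fillable $\xi$), and showing that if the spectral order is infinite for each factor, then it remains infinite for the product.

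The hard part will be the combinatorial verification of the cycle condition at all finite levels: controlling the multiplicities of the domains appearing in iterated differentials and showing that positivity of the Dehn twist factorization translates into non-negativity of those multiplicities, so that no finite-level obstruction to $\C$ being a boundary can arise. A clean way to package this is to show that for a positive open book the relevant spectral sequence collapses at $E_\infty$ with $\C$ surviving, which by definition gives spectral order $= \infty$; here the main technical issue is ensuring that the chain homotopies witnessing invariance of spectral order under choice of open book supporting $\xi$ respect the filtration, so that the value obtained from a positive open book is genuinely an invariant of $\xi$.
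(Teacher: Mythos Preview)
This theorem is not proved in the present paper; it is quoted as \cite[Corollary~4.8]{KMVHMW1}, so there is no in-paper proof to compare against.

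On its own terms, what you have written is a strategy outline rather than a proof, and it has a genuine gap at the inductive step. Your plan is to pick one supporting open book with a positive factorization (via Giroux), verify $\ordn=\infty$ for a single positive Dehn twist, and then ``combine'' using naturality under Weinstein handle attachment. But the classical naturality you invoke is for $\hat c(\xi)$, not for the filtered complex $(\CCF,\widehat\partial)$: there is no established composition law saying that spectral order of a product monodromy is controlled by the spectral orders of the factors, nor that the cobordism maps associated to Weinstein $2$-handles are filtered with respect to $J_+$. You flag this yourself as ``the hard part'' and again in your last sentence, but you do not resolve it, and it is precisely the content of the theorem. A related issue is that $\ord(\xi)$ is defined as the \emph{minimum} of $\ordn(S,\phi,\arcs)$ over all supporting open books and arc collections; exhibiting one open book with $\ordn=\infty$ is only useful once one knows finiteness of $\ordn$ is independent of those choices, which is itself a substantial invariance statement proved in \cite{KMVHMW1}.

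The argument in \cite{KMVHMW1} does not proceed by induction on the number of positive Dehn-twist factors. It instead passes through a filtered refinement of the contact invariant and uses the behaviour of that refinement under symplectic (in particular Stein) cobordisms, from which the corollary follows directly; the Dehn-twist decomposition never enters.
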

\noindent Moreover, the spectral order of the contact structure can be computed using a sufficiently large collection of pairwise disjoint properly embedded arcs on a page of the open book and it is finite if and only if it is finite for some arc collection. In order to compute spectral order for a fixed arc collection, one has to
\begin{enumerate}\leftskip-0.25in
\item \label{item-1}find all Maslov index-1 positive domains in the Heegaard diagram, and
\item \label{item-2}compute the number of holomorphic curves in the moduli spaces of all Maslov index-1 positive domains.
\end{enumerate}
\newpage  
Task (\ref{item-1}) is in general impossible to fulfill if the Heegaard diagram belongs to a closed oriented 3-manifold with non-zero first Betti number. This is why Sucharit Sarkar's \verb|hf-hat| \cite{Sarkar-github} can only be used to analyze Heegaard diagrams for rational homology 3-spheres. In Section \ref{ssec:bound}, we show that there is an \textit{a priori} bound on the number of positive domains between any pair of generators of the Heegaard Floer chain complex when the Heegaard diagram is obtained from an open book and a collection of pairwise disjoint properly embedded arcs on a page of the open book. This allows us to find all positive domains between pairs of generators of the Heegaard Floer chain complex algorithmically, thereby completing task (\ref{item-1}). Using this fact, we wrote an improved version of \verb|hf-hat|, named \verb|hf-hat-obd|, a tutorial for which is provided in Section \ref{sec:hf-hat-obd}. As for task (\ref{item-2}), it was shown by Plamenevskaya in \cite{Plamenevskaya} that one may isotope the monodromy of the open book so as to produce a nice Heegaard diagram. Her proof is based on an algorithm of Sarkar and Wang \cite[Section 4]{SarkarWang} which we implement in an auxiliary program, \verb|makenice|. Once a nice Heegaard diagram resulting from an open book and a collection of pairwise disjoint properly embedded arcs on a page of the open book is provided, \verb|hf-hat-obd-nice| can be used to compute the hat version of Heegaard Floer homology of the 3-manifold described by the Heegaard diagram and the Ozsv\'ath--Szab\'o contact invariant of the contact structure supported by the open book. In addition, \verb|hf-hat-obd-nice| implements an algorithm to compute the value of spectral order of the open book and the arc collection based on a criterion for finiteness of spectral order, which we prove in Section~\ref{ssec:finiteorder}. A tutorial for \verb|hf-hat-obd-nice| is provided in Section \ref{sec:hf-hat-obd-nice}. 
\subsection*{Acknowledgements}
\c Ca\u gatay Kutluhan was supported by Simons Foundation grant No.~519352. Gordana Mati\'c was supported by NSF grants DMS-1664567 and DMS-1612036. Jeremy Van Horn-Morris was supported by Simons Foundation grant No.~279342. Andy Wand was supported by EPSRC grant EP/P004598/1.

\section{Main Results}
\label{sec:keylemmas}
In this section, we prove some key results that form the basis of the algorithms used in \verb|hf-hat-obd| and \verb|hf-hat-obd-nice|. To set the stage, let $(S,\varphi)$ be an abstract open book and $\arcs=\{a_1,\dots,a_\N\}$  be a collection of pairwise disjoint, properly embedded oriented arcs on $S$ cutting it open into a disjoint union of disks. Then the triple $(S,\varphi,\arcs)$, which we shall refer to as an \textit{open book diagram}, yields a weakly admissible (multi-)pointed Heegaard diagram $(\Sigma,\bsb,\bsa,\base)$ where
\begin{itemize}\leftskip-0.25in
\item $\Sigma=S_{1/2}\cup_{\partial S}-S_0$,
\item $\bsa=\{\alpha_1,\dots,\alpha_\N\}$ and $\alpha_i = a_i\cup-a_i$,
\item $\bsb=\{\beta_1,\dots,\beta_\N\}$ and $\beta_i = b_i\cup-\varphi(b_i)$ with each $b_i$ a Hamiltonian isotopic copy of $a_i$ that intersects $a_i$ positively in one distinguished point ${x_\xi}^i\in\mathrm{int}(S_{1/2})$, called a \textit{contact intersection point}, and
\item $\base=\{z_1,\dots, z_m\}$ and each $z_i$ lies in the interior of a distinct connected component of $S_{1/2}\, \setminus\bigcup_{i=1}^{\N} a_i$, outside the thin strips between $a_i$ and $b_i$. 
\end{itemize}

\newpage
\noindent The $\bsa$- and $\bsb$-curves divide the Heegaard surface $\Sigma$ into a number of subsurfaces whose closures are called \textit{regions}. Let $\x=\{x_1,\dots,x_\N\},\y=\{y_1,\dots,y_\N\}$ be two sets of intersection points between $\bsa$- and $\bsb$-curves each of which determines a one-to-one pairing between the two kinds of curves, and $\pitwohat(\x,\y)$ denote the set of 2-chains $D$ that are formal integer linear sums of unpointed regions in the Heegaard diagram $(\Sigma,\bsb,\bsa,\base)$ such that $\partial(\partial|_{\bsb}D)=\y-\x$, where $\x$ and $\y$ are considered 0-chains. Such 2-chains are called \textit{domains} and a domain $D$ is \textit{positive} ($D\geq0$) if its multiplicity in every unpointed region is non-negative. Elements of $\pitwohat(\x,\x)$ are called \textit{periodic domains}.

\subsection{A bound on the number of positive domains}
\label{ssec:bound}
First we prove the aforementioned \textit{a priori} bound on the number of positive domains between $\x$ and $\y$. The upcoming lemma can be considered an improvement over \cite[Lemma 4.13]{OzsvathSzabo} for Heegaard diagrams resulting from open books. 
\begin{lem}
\label{lem:lem-a}
Suppose that $\pitwohat(\x,\y)\neq\emptyset$ and let $k$ denote the number of contact intersection points in $\y$ that are not in $\x$. Then there are at most $2^k$ positive domains in $\pitwohat(\x,\y)$.
\end{lem}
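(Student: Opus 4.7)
The plan is to fix an arbitrary positive domain $D\in\pitwohat(\x,\y)$ and extract combinatorial data from the local picture of $D$ at each contact intersection point $x_\xi^i$. Near $x_\xi^i$ two of the four quadrants are the thin bigons between $a_i$ and $b_i$, while the opposite two are contained in the big regions of $S_{1/2}\setminus\bigcup_j a_j$, each of which carries a basepoint $z_i$ by hypothesis. Since $D$ is an integer sum of unpointed regions, the multiplicities of the two ``big'' quadrants at $x_\xi^i$ are automatically zero. Writing $n_i^+, n_i^-$ for the multiplicities of $D$ in the two bigon quadrants, the standard formula for the local contribution to $\partial\partial|_{\bsb}D=\y-\x$ at $x_\xi^i$ collapses to
\[
n_i^+ + n_i^- \;=\; \delta_{\y}(x_\xi^i) - \delta_{\x}(x_\xi^i) \;\in\; \{-1,0,1\}.
\]
Combined with $n_i^\pm\ge 0$, this forces $(n_i^+, n_i^-)=(0,0)$ when $x_\xi^i\in\x\cap\y$ or $x_\xi^i\notin\x\cup\y$; it rules out any positive domain when some $x_\xi^i\in\x\setminus\y$; and it leaves exactly the two options $(1,0)$ or $(0,1)$ when $x_\xi^i\in\y\setminus\x$. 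Hence the bigon-multiplicity tuple $(n_i^+, n_i^-)_{i=1}^{\N}$ of a positive domain takes at most $2^k$ distinct values.

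It then remains to show that the bigon-multiplicity tuple determines a positive domain uniquely. Given two positive domains $D,D'\in\pitwohat(\x,\y)$ with the same tuple, their difference $P:=D-D'\in\pitwohat(\x,\x)$ is a periodic domain that vanishes on every pointed region and on every thin bigon. Writing $\partial|_{\bsa}P=\sum_{i}n_i\alpha_i$ for integers $n_i$, the coefficient $n_i$ equals the multiplicity jump of $P$ across any arc segment of $\alpha_i$; taking such a segment along the $a_i$-part of $\alpha_i$, the two regions on either side are among the thin bigons and the pointed big regions in $S_{1/2}$, all of which have vanishing multiplicity in $P$, so $n_i=0$. An entirely analogous argument using the $b_i$-part of $\beta_i$ gives $\partial|_{\bsb}P=0$, whence $\partial P=0$ and $P$ is a $2$-cycle on the closed connected surface $\Sigma$. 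Such a $2$-cycle is an integer multiple of $[\Sigma]$, and since $P$ vanishes on at least one pointed region that multiple must be zero. Thus $D=D'$, and the count $2^k$ follows.

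The main obstacle is verifying the local geometric picture at each $x_\xi^i$: although $x_\xi^i$ lies in $\mathrm{int}(S_{1/2})$, the four ambient regions of $\Sigma$ meeting at $x_\xi^i$ may extend nontrivially across $\partial S$ into $-S_0$ in ways that depend on the monodromy $\varphi$. The argument must therefore remain purely local, invoking only that the two bigon quadrants sit in the thin strips bounded by $a_i$ and $b_i$, while the opposite two quadrants sit in the pointed big components of $S_{1/2}\setminus\bigcup_j a_j$ -- both facts being guaranteed directly by the open book diagram setup.
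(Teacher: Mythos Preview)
Your argument is correct and follows the same architecture as the paper's proof: constrain the multiplicities of a positive domain in the two unpointed regions $R^i_1,R^i_3$ adjacent to each $x_\xi^i$, and then show that a domain is determined by these multiplicities because the difference of two such domains is a periodic domain forced to vanish. The differences are presentational rather than substantive. Where the paper invokes \cite[Lemma~2.18]{OzsvathSzabo} to conclude that a periodic domain is determined by its $R^i_1,R^i_3$-multiplicities, you argue directly that a periodic domain vanishing on all thin-strip and pointed regions has trivial $\bsa$- and $\bsb$-boundary (by reading off the jump across a segment of $a_i$ or $b_i$ in $S_{1/2}$) and is therefore a multiple of $[\Sigma]$, hence zero. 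You also make explicit the corner relation $n_i^++n_i^-=\delta_\y(x_\xi^i)-\delta_\x(x_\xi^i)$, which the paper only records in its conclusion (``multiplicity~$1$ in exactly one of $R^i_1$ or $R^i_3$''), and you extract the side observation that $x_\xi^i\in\x\setminus\y$ forbids positive domains altogether---this is consistent with, but not stated in, the paper's proof.
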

\begin{proof}
Fix $D_\circ\in\pitwohat(\x,\y)$ and let $D\in \pitwohat(\x,\y)$ be a positive domain. If $D\neq D_\circ$, then $D-D_\circ\in\pitwohat(\C,\C)$ is a non-trivial periodic domain. Since a non-trivial periodic domain has to have the entirety of at least one $\bsa$- or $\bsb$-curve with non-zero multiplicity on its boundary, it has to contain at least one contact intersection point on its boundary. Moreover, a contact intersection point ${x_\xi}^i$ is at the corner of exactly two unpointed regions, ${R^i}_1$ and ${R^i}_3$, which intersect only at that contact intersection point (see Figure \ref{fig:regions}); therefore, a non-trivial periodic domain containing the contact intersection point ${x_\xi}^i$ has to have both $\alpha_i$ and $\beta_i$ on its boundary with multiplicities in ${R^i}_1$ and ${R^i}_3$ adding up to zero. It follows, by applying \cite[Lemma 2.18]{OzsvathSzabo}, that the multiplicities of $D-D_\circ$ in ${R^i}_1$ and ${R^i}_3$ for $i\in\{1,\dots,\N\}$ uniquely determine the periodic domain. Consequently, multiplicities of $D$ in ${R^i}_1$ and ${R^i}_3$ for ${x_\xi}^i$ in $\y$ but not in $\x$ uniquely determine that domain. Since $D$ has multiplicity $1$ in exactly one of ${R^i}_1$ or ${R^i}_3$ while having multiplicity $0$ in the other three regions with corners at ${x_\xi}^i$ for each ${x_\xi}^i$ in $\y$ but not in $\x$, there are at most $2^k$ positive domains in $\pitwohat(\x,\y)$.
\end{proof}

\begin{figure}[h]
\centering
\includegraphics[width=1.5in]{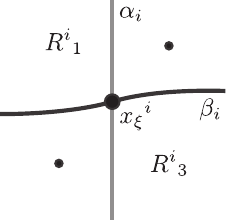}
\caption{Neighborhood of a contact intersection point.}
\label{fig:regions}
\end{figure}

\subsection{Detecting finiteness of spectral order}
\label{ssec:finiteorder}
As was shown in \cite[Section 5.2]{KMVHMW1}, finiteness of spectral order is an obstruction to Stein fillability of contact 3-manifolds that is stronger than vanishing of the Ozsv\'ath--Szab\'o contact invariant. In this regard, the ability to detect finiteness of spectral order is valuable even if we may not be able to compute its exact value. To detect finiteness of spectral order, it suffices to find an open book $(S,\phi)$ and an arc collection $\arcs$ on $S$ for which spectral order is finite. In this subsection, we discuss how to do this efficiently. 

In what follows, $Y$ denotes the closed oriented 3-manifold described by the Heegaard diagram $(\Sigma,\bsa,\bsb)$ and $\xi$ is the contact structure on $Y$ supported by the open book $(S,\varphi)$. We start by recalling the definition of spectral order. For any $D\in \pitwohat(\x,\y)$, define
\begin{equation}
\label{eq:jplus-prod}
J_+(D):=\mu(D)-2e(D)+|\x|-|\y|,
\end{equation}
where 
\begin{itemize}\leftskip-0.25in
\item $|\cdot|$ denotes the number of disjoint cycles in the element of the symmetric group $\symgrp$ determined by the pairing between the $\bsa$- and $\bsb$-curves associated to a generator of the Heegaard Floer chain complex (e.g. the canonical generator $\x_\xi=\{{x_\xi}^1,\dots,{x_\xi}^\N\}$ has $|\x_\xi|=\N$), 
\item $e(D)$ is the Euler measure of $D$ (see \cite[\textsection 4.1 p. 973]{Lipshitz} for definition), and 
\item $\mu(D)$ is the Maslov index of $D$, which, by a formula of Lipshitz \cite[Corollary 4.10]{Lipshitz} (cf.  \cite[Proposition $4.8'$]{Lipshitz2}), is equal to 
\begin{equation}
\label{eq:maslov}
\mu(D)=n_{\x}(D)+n_{\y}(D)+e(D),
\end{equation}
with $n_\x(D)$ representing the point measure of $D$ at $\x$, i.e. the sum of the averages of the coefficients of $D$ in the four regions with corners at each $x_i\in\alpha_i\cap\beta_j$.
\end{itemize} 
It was shown in \cite[Section 2.2]{KMVHMW1} that $J_+(D)$ is a non-negative integer divisible by two when $D\in\pitwohat(\x,\y)$ is represented by a holomorphic curve. 
Therefore, we can decompose the Heegaard Floer differential as
\[\dhat=\partial_0+\partial_1+\cdots+\partial_\ell+\cdots,\]
where $\partial_\ell$ counts holomorphic curves representing domains with $J_+=2\ell$. Using this decomposition, construct a chain complex
\[\CCF(S,\phi,\arcs):=\cfhat(\Mba)\otimes_\F \F[t,t^{-1}]\] 
over $\F=\mathbb{Z}/2\mathbb{Z}$ filtered by the power of $t$ and endowed with the differential 
\[\widehat{\partial}:=\sum_{\ell\in\mathbb{N}}\partial_\ell \otimes t^{-\ell}.\]
Define $\ordn(S,\phi,\arcs)$ to be the smallest non-negative integer $k$ such that $\x_\xi$ represents the trivial class in the $k+1^{\textrm{st}}$ page of the spectral sequence $E^{\ast}(S,\phi,\arcs)$ associated to the filtered chain complex $(\CCF(S,\phi,\arcs),\widehat{\partial})$, and the spectral order $\ord(\xi)$ to be
\[\ord(\xi):=\min\{\ordn(S,\phi,\arcs)\},\]
where the minimum is taken over all open books supporting $\xi$ and all collections of pairwise disjoint properly embedded arcs on pages of those open books. With the preceding understood, we have the following lemma.
\begin{lem}
\label{lem:finiteness}
Suppose that $c_1(\xi)$ is torsion. If $\chat(\xi)=0$, then $\ord(\xi)<\infty$.
\end{lem}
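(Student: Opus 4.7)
The plan is to use torsion of $c_1(\xi)$ to ensure the spectral sequence has only finitely many nontrivial pages, and then to leverage $\chat(\xi) = 0$ via an explicit chain-level computation to exhibit $\mathbf{x}_\xi$ as a boundary at a finite page.

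Since $c_1(\xi)$ is torsion, the Spin$^c$ summand of $\cfhat(\Mba)$ containing $\mathbf{x}_\xi$ is finite-dimensional over $\F$ with a relative $\Z$-grading, which forces only finitely many $\partial_\ell$'s to contribute to $\dhat$; let $L$ denote the largest $\ell$ with $\partial_\ell \neq 0$. Consequently $\widehat{\partial} = \sum_{\ell=0}^L \partial_\ell \otimes t^{-\ell}$ decreases the $t$-power filtration on $\CCF(S,\phi,\arcs)$ by at most $L$, so the spectral sequence differentials $d_r$ vanish for $r > L$ and the sequence stabilizes at $E^{L+1} = E^\infty$.

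By hypothesis, $\chat(\xi) = 0$ yields $\eta \in \cfhat(\Mba)$ with $\dhat\, \eta = \mathbf{x}_\xi$, so $\sum_{\ell=0}^L \partial_\ell \eta = \mathbf{x}_\xi$. The core calculation, using the $\F = \Z/2\Z$ factorization $t^m + 1 = (1 + t)(1 + t + \cdots + t^{m-1})$, is the identity
\[
\widehat{\partial}(\eta \otimes t^L) = \mathbf{x}_\xi \otimes t^0 + (1 + t)\, h'
\]
in $\CCF$, where $h' := \sum_{\ell=0}^{L-1} \partial_\ell \eta \otimes \sum_{k=0}^{L-\ell-1} t^k$ lies in the filtration subcomplex $F_{L-1}$. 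Since $\partial_n \mathbf{x}_\xi = 0$ for every $n$ (because $\mathbf{x}_\xi$ admits no outgoing positive domains missing the basepoints) and $(1 + t)$ is a non-zero divisor in $\F[t, t^{-1}]$, applying $\widehat{\partial}$ to both sides of the identity gives $\widehat{\partial} h' = 0$, so $h'$ is a cycle in $\CCF$ and $[\mathbf{x}_\xi \otimes t^0] = (1+t)\,[h']$ in $H_*(\CCF, \widehat{\partial})$.

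The main obstacle is promoting this chain-level divisibility into the spectral sequence statement that $[\mathbf{x}_\xi]$ represents the trivial class on $E^{L+1}$. The key observation is that the leading filtration component of $h'$, its $t^{L-1}$-coefficient, equals $\partial_0 \eta$, which is already a $\partial_0$-boundary and so represents the zero class on $E^1_{L-1}$. Subtracting $\widehat{\partial}(\eta \otimes t^{L-1})$ from $h'$ produces a homologous cycle $h'_1 \in F_{L-2}$ whose new leading coefficient is again $\partial_0 \eta$, and iterating the descent at most $L$ times pushes the cycle into $F_{-1}$. Assembling these reductions will realize $\mathbf{x}_\xi \otimes t^0$ as a $d_L$-boundary on $E^L$, so that $\ordn(S,\phi,\arcs) \leq L$ and therefore $\ord(\xi) \leq L < \infty$.
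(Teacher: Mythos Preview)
There is a genuine gap. First, you misidentify the role of the torsion hypothesis. The fact that only finitely many $\partial_\ell$ are nonzero holds for \emph{any} open book diagram: by Lemma~\ref{lem:lem-a} there are at most $2^k$ positive domains in each $\pitwohat(\x,\y)$, and there are only finitely many generator pairs. What torsion actually buys is that $J_+(P)=\mu(P)-2e(P)=0$ for every periodic domain $P\in\pitwohat(\C,\C)$ (since $\mu(P)=\langle c_1(\xi),H(P)\rangle=0$ and $n_{\C}(P)=0$ forces $e(P)=0$). This makes $J_+$ a well-defined relative $\Z$-grading on generators, so one can filter the \emph{finite-dimensional} complex $\cfhat(\Mba,\spc_\xi)$ itself by $J_+$; that filtration is bounded, its spectral sequence converges to $\hfhat(-Y,\spc_\xi)$, and hence $[\x_\xi]$ must die on a finite page if $\chat(\xi)=0$. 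Concretely, decomposing $\eta=\sum_m \eta_m$ by $J_+$-grading and setting $b_j=\eta_j$ solves the zigzag equations directly. Your argument never invokes this grading and thus never really uses the hypothesis.

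Second, the descent in your last paragraph does not close. Your identity $\widehat{\partial}(\eta\otimes t^L)=\x_\xi\otimes t^0+(1+t)h'$ is correct, and you can indeed replace $h'$ by a homologous cycle $h'_L\in F_{-1}$ by successively subtracting $\widehat{\partial}(\eta\otimes t^{L-j})$. But you need $\widehat{\partial}(w)-\x_\xi\otimes t^0\in F_{-1}$ for some $w\in F_L$, i.e.\ you need $(1+t)h'_L\in F_{-1}$. Multiplication by $(1+t)$ raises the top $t$-power by one, so $(1+t)h'_L\in F_0$, and its $t^0$-coefficient is the $t^{-1}$-coefficient of $h'_L$, which is $\partial_L\eta$ after your first subtraction and is not forced to vanish. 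So the process leaves a residual term at filtration level $0$, and you have not exhibited $\x_\xi\otimes t^0$ as a $d_L$-boundary. Equivalently, showing $[\x_\xi\otimes t^0]=(1+t)[h']$ in $H_*(\CCF)$ is strictly weaker than showing it vanishes on a finite page; without the $J_+$-grading on generators there is no evident way to bridge this gap.
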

\begin{proof}
First note that for any periodic domain $P\in\pitwohat(\C,\C)$, we have $\mu(P)=\langle c_1(\xi), H(P)\rangle=0$, and $n_{\C}(P)=0$; hence, $e(P)=0$. As a result, $J_+(P)=0$ for every periodic domain $P\in\pitwohat(\C,\C)$. It follows that there is a well-defined filtration $\mathcal{F}$ on $\cfhat(\Sigma,\bsb,\bsa,\spc_\xi)$ by $J_+$ where $\C$ lies in $\mathcal{F}_0\setminus\mathcal{F}_{-1}$. Moreover, this filtration is bounded by virtue of the fact that $\cfhat(\Sigma,\bsb,\bsa,\spc_\xi)$ is finitely generated. Consequently, the spectral sequence of the filtered chain complex converges to the homology of $\cfhat(\Sigma,\bsb,\bsa,\spc_\xi)$, that is, $\hfhat(-Y,\spc_\xi)$. Meanwhile, $\ord(\xi)<\infty$ if and only if $\C$ represents the zero class on some finite page of the spectral sequence of the filtered chain complex, which must be the case if $\chat(\xi)=0$ in $\hfhat(-Y,\spc_\xi)$.
\end{proof}

To detect whether $\ordn(S,\phi,\arcs)<\infty$ or not in general, we have to compute the differential on $\cfhat(\Sigma,\bsb,\bsa,\spc_\xi)$. To be able to do this, we assume that $\phi$ is such that the Heegaard diagram $(\Sigma,\bsb,\bsa,\base)$ constructed from $(S,\phi,\arcs)$ is nice, i.e. all unpointed regions in the Heegaard diagram are either bigons or rectangles. Since Maslov index-1 positive domains in a nice Heegaard diagram are either empty embedded bigons or empty embedded rectangles \cite[Theorem 3.3]{SarkarWang}, the moduli space of holomorphic curves for a Maslov index-1 positive domain consists of a single point, and 
\[\dhat=\partial_0+\partial_1.\]
Moreover, $\del_0$ and $\del_1$ are both differentials themselves and $\del_0\del_1=\del_1\del_0$. With the preceding understood, let $C_0\subset \cfhat(\Sigma,\bsb,\bsa,\spc_\xi)$ denote the subspace of chains in the same (relative) Maslov grading as $\x_\xi$ and $C_1\subset \cfhat(\Sigma,\bsb,\bsa,\spc_\xi)$ denote the subspace of chains in (relative) Maslov grading 1 above $\x_\xi$. Then $\ordn(S,\phi,\arcs)<\infty$ if and only if there exist chains $b_0,\dots,b_k\in C_1$ with $k\geq 0$ such that
\begin{equation}
\label{eq:zigzag}
\begin{split}
\del_0b_0+\del_1b_1&=\x_\xi,\\
\del_0b_1+\del_1b_2&=0,\\
&\vdots\\
\del_0b_{k-1}+\del_1b_k&=0,\\
\del_0b_k&=0.
\end{split}
\end{equation}
\begin{figure}[h]
\centering
\includegraphics[width=4in]{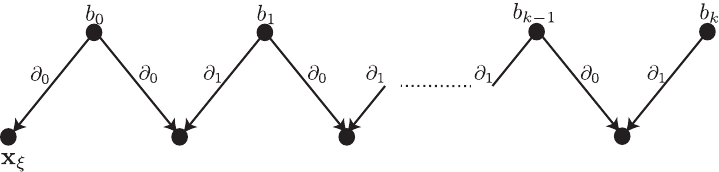}
\caption{Directed graph depicting $b_0,\dots,b_k\in C_1$ from \eqref{eq:zigzag} and their differentials.}
\label{fig:zigzag}
\end{figure}
\vspace{-0.1in}

We shall regard $\del_0,\del_1$ as maps from $C_1$ to $C_0$. The next lemma allows us to assume, without loss of generality, that $\del_1$ is injective.
\begin{lem}
\label{lem:injdel1}
There exists a minimal subspace $K\subset C_1$ with the property that the map from $C_1/K$ to $C_0/\del_0(K)$ induced by $\del_1$ is injective.
\end{lem}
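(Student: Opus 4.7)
The plan is to reformulate the property asked of $K$ as a fixed-point condition, solve it by iteration, and verify minimality. First I would observe that $\del_1\colon C_1\to C_0$ descends to a well-defined linear map $\overline{\del_1}\colon C_1/K\to C_0/\del_0(K)$ exactly when $\del_1(K)\subseteq\del_0(K)$, and that this induced map is injective exactly when $\del_1^{-1}(\del_0(K))\subseteq K$. Combining these two conditions, the subspaces of interest are precisely the solutions of the fixed-point equation
$$K=\del_1^{-1}(\del_0(K)),$$
so the lemma reduces to exhibiting a minimal such $K$.

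Next I would observe that any solution $K$ automatically contains $\ker\del_1$, since $0\in\del_0(K)$ forces $\ker\del_1=\del_1^{-1}(0)\subseteq\del_1^{-1}(\del_0(K))=K$. This identifies the correct starting point for an iterative construction: set $K_0:=\ker\del_1$ and $K_{n+1}:=\del_1^{-1}(\del_0(K_n))$. Because both $\del_0(\cdot)$ and $\del_1^{-1}(\cdot)$ are inclusion-monotone, a short induction shows that the chain $K_0\subseteq K_1\subseteq K_2\subseteq\cdots$ is increasing. Since $\cfhat(\Sigma,\bsb,\bsa,\spc_\xi)$ is finitely generated over $\F$, so is $C_1$, and the ascending chain stabilizes at some $K:=K_N$. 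By construction $K=\del_1^{-1}(\del_0(K))$, so the induced map $\overline{\del_1}$ is both well-defined and injective.

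For minimality, I would show that this $K$ is contained in any other solution $K'$. The base case $K_0=\ker\del_1\subseteq K'$ was already noted. For the inductive step, if $K_n\subseteq K'$ then $\del_0(K_n)\subseteq\del_0(K')$, whence $K_{n+1}=\del_1^{-1}(\del_0(K_n))\subseteq\del_1^{-1}(\del_0(K'))=K'$. Thus $K_n\subseteq K'$ for all $n$, and $K\subseteq K'$, so $K$ is in fact the unique minimum solution and therefore, \emph{a fortiori}, a minimal one.

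There is no serious obstacle here; the main conceptual point is recognizing that the two competing requirements on $K$ (well-definedness of $\overline{\del_1}$ and its injectivity) fuse into the single monotone fixed-point equation $K=\del_1^{-1}(\del_0(K))$, after which a routine ascending-chain argument in a finite-dimensional vector space produces the minimum fixed point above $\ker\del_1$.
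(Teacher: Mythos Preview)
Your proof is correct and essentially identical to the paper's: the paper also iterates $K_0=\ker\del_1$, $K_i=\ker(\del_1\colon C_1\to C_0/\del_0(K_{i-1}))=\del_1^{-1}(\del_0(K_{i-1}))$ and invokes finite-dimensionality to stabilize. Your version is in fact more complete, since you make the fixed-point reformulation explicit and supply the inductive minimality argument that the paper's proof leaves unstated.
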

\begin{proof}
Construct a nested sequence $K_0\subset K_1\subset\cdots\subset K_i\subset\cdots$ of subspaces of $C_1$ as follows:
\begin{eqnarray*}
K_0&=&\ker(\del_1:C_1\to C_0),\\
K_i&=&\ker(\del_1:C_1\to C_0/\del_0(K_{i-1}))\;\textrm{for\;each}\; i\geq 1.
\end{eqnarray*}
Since $C_1$ is finite dimensional, there exists $n\geq 0$ and a subspace $K\subset C_1$ such that $K_i=K$ for all $i\geq n$. Then 
\[\ker(\del_1:C_1\to C_0/\del_0(K_n))=K,\]
hence the induced map $\underline{\del}\,_1:C_1/K\to C_0/\del_0(K))$ is injective.
\end{proof}
\noindent Note that if $\ordn(S,\phi,\arcs)$ is non-zero, $K=C_1$ implies that $\ordn(S,\phi,\arcs)=\infty$ since otherwise $\del_1(C_1)\subset \del_0(C_1)$ and we would be able to find $b\in C_1$ such that $\del_0 b=\x_\xi$, resulting in a contradiction.

With the above understood, if $\ordn(S,\phi,\arcs)\neq 0$ and $K\neq C_1$, then $C_0/\del_0(K)$ and $C_1/K$ are both non-zero and we may instead consider the maps 
\begin{eqnarray*}
&\underline{\del}\,_0:C_1/K\to C_0/\del_0(K)),\\
&\underline{\del}\,_1:C_1/K\to C_0/\del_0(K)).
\end{eqnarray*}
Clearly, if there exist $b_0,\dots, b_k\in C_1$ satisfying \eqref{eq:zigzag}, then $\underline{b}\,_0,\dots,\underline{b}\,_k\in C_1/K$ satisfy 
\begin{equation}
\label{eq:zigzagmod}
\begin{split}
\underline{\del}\,_0\,\underline{b}\,_0+\underline{\del}\,_1\,\underline{b}\,_1&=\underline{\x}\,_\xi,\\
\underline{\del}\,_0\,\underline{b}\,_1+\underline{\del}\,_1\,\underline{b}\,_2&=0,\\
&\vdots\\
\underline{\del}\,_0\,\underline{b}\,_{k-1}+\underline{\del}\,_1\,\underline{b}\,_k&=0,\\
\underline{\del}\,_0\,\underline{b}\,_k&=0.
\end{split}
\end{equation}
Conversely,
\begin{lem}
\label{lem:reduction}
If there are $\underline{b}\,_0,\dots,\underline{b}\,_k\in C_1/K$ satisfying \eqref{eq:zigzagmod}, then there are $b_0,\dots,b_k\in C_1$ satisfying \eqref{eq:zigzag}.
\end{lem}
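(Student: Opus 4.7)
The plan is to start from arbitrary lifts $b_0,\dots,b_k \in C_1$ of the classes $\underline{b}\,_0,\dots,\underline{b}\,_k$ and then correct them by adding well-chosen elements $\gamma_i \in K$. Since adjusting $b_i$ by an element of $K$ leaves its image in $C_1/K$ fixed, any such correction will still project to $\underline{b}\,_i$, so it suffices to arrange that the corrected chains satisfy \eqref{eq:zigzag} exactly. The single structural input I expect to use is the defining property of the stabilized subspace $K$ from Lemma~\ref{lem:injdel1}, namely $\del_1(K) \subset \del_0(K)$, which is immediate from the identity $K = \ker(\del_1 : C_1 \to C_0/\del_0(K))$.

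First I would choose any lifts $b_i$ of $\underline{b}\,_i$. The relations \eqref{eq:zigzagmod} then say that each corresponding identity in \eqref{eq:zigzag} holds modulo $\del_0(K)$, so there exist $\kappa_0,\dots,\kappa_k \in K$ recording the errors:
\[
\del_0 b_0 + \del_1 b_1 = \x_\xi + \del_0 \kappa_0,\quad \del_0 b_i + \del_1 b_{i+1} = \del_0 \kappa_i\ (1 \le i \le k-1),\quad \del_0 b_k = \del_0 \kappa_k.
\]
The problem thus reduces to finding $\gamma_0,\dots,\gamma_k \in K$ (with the convention $\gamma_{k+1} := 0$) satisfying the analogous system $\del_0 \gamma_i + \del_1 \gamma_{i+1} = \del_0 \kappa_i$ for all $0 \le i \le k$, because then $b_i + \gamma_i$ will be the desired lifts.

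I would solve this auxiliary system by downward induction on $i$, starting with $\gamma_k := \kappa_k$, which already satisfies the bottom equation. Given $\gamma_{i+1} \in K$, the equation for $\gamma_i$ reads $\del_0 \gamma_i = \del_0 \kappa_i + \del_1 \gamma_{i+1}$; since $\gamma_{i+1} \in K$, the property $\del_1(K) \subset \del_0(K)$ supplies some $\mu_{i+1} \in K$ with $\del_1 \gamma_{i+1} = \del_0 \mu_{i+1}$, and one can then take $\gamma_i := \kappa_i + \mu_{i+1}$. Iterating all the way down to $\gamma_0$ produces the required corrections (working over $\F = \Z/2\Z$ makes the sign bookkeeping trivial).

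The only subtle point, and the one that could conceivably block the induction, is ensuring at each step that the relevant right-hand side really lies in $\del_0(K)$. That is precisely what the stabilization built into the construction of $K$ buys: without the inclusion $\del_1(K) \subset \del_0(K)$ the successive equations would not decouple and the backward-induction procedure would fail. Once this invariance is granted, the remainder is a routine diagram chase.
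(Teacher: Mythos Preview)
Your argument is correct and follows essentially the same route as the paper's proof: lift the $\underline{b}\,_i$ to $C_1$, record the resulting errors as elements of $\del_0(K)$, and then correct by a downward induction using the stabilization property $\del_1(K)\subset\del_0(K)$. Your presentation separates the correction step into an auxiliary system for the $\gamma_i$, which is slightly cleaner, but the content is identical to the paper's construction $b_k=c_k+d_k$, $b_{k-1}=c_{k-1}+d_{k-1}+e_{k-1}$, etc.
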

\begin{proof}
Let $\underline{b}\,_0,\dots,\underline{b}\,_k\in C_1/K$ be such that \eqref{eq:zigzagmod} is satisfied. Then there are $c_0,\dots,c_k\in C_1$ with $\underline{b}\,_i=c_iK$ for all $i=0,\dots,k$ and $d_0,\dots, d_k\in K$ such that
\begin{equation*}
\begin{split}
\del_0c_0+\del_1c_1&=\x_\xi+\del_0d_0,\\
\del_0c_1+\del_1c_2&=\del_0d_1,\\
&\vdots\\
\del_0c_{k-1}+\del_1c_k&=\del_0d_{k-1},\\
\del_0c_k&=\del_0d_k.
\end{split}
\end{equation*}
(We use the fact that $\del_0\del_1=\del_1\del_0$ to derive the above set of equations.) Let $b_k=c_k+d_k$. Then $\del_0 b_k=0$ and $\del_1 b_k=\del_1c_k+\del_1 d_k=\del_1c_k+\del_0e_{k-1}$ for some $e_{k-1}\in K$ by the proof of Lemma \ref{lem:injdel1}. Next, define $b_{k-1}=c_{k-1}+d_{k-1}+e_{k-1}$ and note that
\[\del_0b_{k-1}+\del_1b_k=\del_0(c_{k-1}+d_{k-1}+e_{k-1})+\del_1c_k+\del_0e_{k-1}=0.\]
Continue in the same way so as to define all other $b_i\in C_1$ and note that $b_0,\dots,b_k$ satisfy \eqref{eq:zigzag}.
\end{proof}
\noindent For the remainder of this section, we shall assume that $\del_1:C_1\to C_0$ is injective. Furthermore, if $\del_0:C_1\to C_0$ were also injective, then $\ordn(S,\phi,\arcs)$ would be infinity, as is easily seen from \eqref{eq:zigzag}. Therefore, we shall also assume that $\del_0$ is not injective.

Since $\del_1$ is injective, there exists a map $\delta: U_0=\del_1(C_1)\to C_0$ defined by $\delta = \del_0\circ{\del_1}^{-1}$. Note that $\ker{\delta}=\del_1(\ker{\del_0})\neq 0$ since $\del_0$ is not injective and $\del_1$ is injective. Next, for $i\geq 1$ define subspaces $U_i\subset C_0$ iteratively as $U_i=\delta^{-1}(U_{i-1})$. Note that these subspaces satisfy
\[\ker{\delta}\subset\cdots\subset U_i\subset\cdots\subset U_1\subset U_0,\]
and that $\bigcap U_i=U_n\neq 0$ for some $n\geq 0$ since $C_0$ is finite dimensional and $\ker\delta\neq 0$. Then for each $i\geq 1$ the maps defined by $\delta_i = \delta|_{U_i}: U_i\to U_{i-1}$ and 
\[\delta^i=\delta\circ\delta_1\circ\cdots\circ\delta_{i-1}:U_{i-1}\to C_0\]
have $\ker{\delta_i}=\ker\delta$ and 
\[\ker{\delta^i}={\delta_{i-1}}^{-1}({\delta_{i-2}}^{-1}(\dots({\delta_1}^{-1}(\ker\delta)))\cdots)={\delta_{i-1}}^{-1}(\ker{\delta^{i-1}}).\]
It is easy to see by induction that $\ker{\delta^i}\subset\ker{\delta^{i+1}}$ for all $i\geq 1$. Therefore, 
\begin{itemize}\leftskip-0.25in
\item $\ker{\delta^i}\subset U_n$ for all $i\geq 1$, and 
\item there exists $m\geq1$ such that $\ker{\delta^i}\subset\ker{\delta^m}$ for all $i\geq 1$.
\end{itemize}
Finally, let 
\[\mathcal{R}=\x_\xi+\del_0(C_1),\]
which we call the \textit{remainder set}. Then, 
\begin{theorem}
\label{thm:detectfiniteorder}
$\ordn(S,\phi,\arcs)<\infty$ if and only if $\ker{\delta^m}\cap\mathcal{R}\neq\emptyset$, where $m\geq1$ is as in the second bullet point above.
\end{theorem}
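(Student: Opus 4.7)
The plan is to translate the zigzag system \eqref{eq:zigzag} into the language of $\delta$ and its iterates, and then establish the equivalence by a pair of mirror-image inductions. The first step is to invoke Lemmas \ref{lem:injdel1} and \ref{lem:reduction} to reduce to the case that $\del_1\colon C_1 \to C_0$ is injective, so that $\delta = \del_0 \circ \del_1^{-1}$ is well-defined on $U_0 = \del_1(C_1)$ and the nested chains of subspaces $U_i$ and kernels $\ker \delta^i$ are available together with their stabilization index $m$. The key translation, valid over $\F$, is that the relation $\del_0 b_j + \del_1 b_{j+1} = 0$ rewrites as $\delta(\del_1 b_j) = \del_1 b_{j+1}$, so each step of the zigzag corresponds to one application of $\delta$, and the top equation reads $\x_\xi + \del_0 b_0 = \del_1 b_1$.

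For the forward direction, I would assume a zigzag $b_0,\dots,b_k$ satisfying \eqref{eq:zigzag} exists and prove by downward induction on $j$ that $\del_1 b_j \in \ker \delta^{k+1-j}$. The base case $j = k$ uses $\del_0 b_k = 0$ to conclude $\delta(\del_1 b_k) = 0$, i.e., $\del_1 b_k \in \ker \delta$. The induction step is immediate from the translation: $\delta(\del_1 b_j) = \del_1 b_{j+1}$ lies in $\ker \delta^{k-j}$ by hypothesis, hence $\del_1 b_j \in \ker \delta^{k+1-j}$. Setting $j = 1$ yields $\del_1 b_1 \in \ker \delta^k \subset \ker \delta^m$, and the top equation then exhibits $\x_\xi + \del_0 b_0 = \del_1 b_1$ as an element of $\mathcal{R} \cap \ker \delta^m$.

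For the reverse direction, suppose $v \in \mathcal{R} \cap \ker \delta^m$ and write $v = \x_\xi + \del_0 b_0$ for some $b_0 \in C_1$. Because $\ker \delta^m \subset U_0 = \del_1(C_1)$, injectivity of $\del_1$ produces a unique $b_1$ with $\del_1 b_1 = v \in \ker \delta^m$. I would then iterate: given $b_i$ with $\del_1 b_i \in \ker \delta^{m+1-i}$, the element $\del_0 b_i = \delta(\del_1 b_i)$ lies in $\ker \delta^{m-i} \subset U_0$, so there is a unique $b_{i+1} \in C_1$ with $\del_1 b_{i+1} = \del_0 b_i$, and moreover $\del_1 b_{i+1} \in \ker \delta^{m-i}$. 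After $m$ such steps, $\del_1 b_m \in \ker \delta$ forces $\del_0 b_m = 0$, so $b_0,\dots,b_m$ assemble into a zigzag of length $k = m$ satisfying \eqref{eq:zigzag}.

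The main technical obstacle I anticipate is the careful bookkeeping needed to guarantee that each $\del_0 b_i$ actually lies in $U_0$ so that the lift via $\del_1^{-1}$ exists; this is exactly the content of the nesting $\ker \delta^j \subset U_0$ built into the definitions of the $U_i$, and once it is in hand the two directions become formal mirror images of each other. The stabilization index $m$ then bounds the required length of the zigzag, which is why testing membership in $\ker \delta^m$ is sufficient and not merely necessary.
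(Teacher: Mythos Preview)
Your proposal is correct and follows essentially the same route as the paper's proof: both directions proceed by translating the zigzag \eqref{eq:zigzag} into iterated applications of $\delta$ to the element $\del_1 b_1 = \x_\xi + \del_0 b_0 \in \mathcal{R}$, with the forward direction landing that element in $\ker\delta^k\subset\ker\delta^m$ and the reverse direction reconstructing the $b_i$ as $\del_1^{-1}$ of successive $\delta$-iterates. The only cosmetic difference is that the paper, in the converse, takes the minimal $k$ with $\delta^k r = 0$ rather than $k=m$, which lets it record the additional formula $\ordn(S,\phi,\arcs)=\min\{k : \ker\delta^k\cap\mathcal{R}\neq\emptyset\}$.
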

\begin{proof}
It is clear that if there exist $b_0,\dots,b_k\in C_1$ satisfying \eqref{eq:zigzag}, then $\x_\xi+\del_0 b_0\in \ker{\delta^k}\cap\mathcal{R}$ which in turn is contained in $\ker{\delta^m}\cap\mathcal{R}$ by the second bullet point above. Conversely, if there exist $r\in \ker{\delta^m}\cap\mathcal{R}$, then $\delta^k r=0$ for some $k\geq1$, $\x_\xi+r=\del_0b_0$ for some $b_0\in C_1$, and $b_i\in C_0$ defined by $b_i={\del_1}^{-1}r_i$ where $r_1=r$ and $r_{i+1} =\delta r_i$ for $i=1,\dots,k-1$ satisfy \eqref{eq:zigzag}. Hence, $\ordn(S,\phi,\arcs)<\infty$ and 
\[\ordn(S,\phi,\arcs)=\min\{k\;|\;\ker{\delta^k}\cap\mathcal{R}\neq\emptyset\}.\qedhere\]
\end{proof}


\section{A Tutorial for \texttt{hf-hat-obd}}
\label{sec:hf-hat-obd}
Download \verb|hf-hat-obd.sage| from \cite{KMVHMW-github}. After running SageMath on your computer, load the program:
\begin{lstlisting}
sage: load('hf-hat-obd.sage')
\end{lstlisting}
\noindent The primary input for the program is a list of regions cut out by the alpha and beta curves in the (multi-)pointed Heegaard diagram resulting from an open book and a collection of disjoint properly embedded arcs on a page of the open book, which we shall refer to as an \textit{open book diagram}. To form a list of regions, one starts by labeling the intersection points on each alpha curve making sure that the lowest index on each alpha curve is assigned to the contact intersection point on that curve. Once every intersection point is labeled, proceed as described by Sarkar in his SageMath program \verb|hf-hat| to associate to each region a list of intersection points where the first two intersection points in the list lie on the same alpha curve and the intersection points are ordered so as to traverse the boundary of that region in the counter-clockwise orientation. Finally, form a list of such lists of intersection points where the lists of intersection points corresponding to the pointed regions are located at the end. For example\footnote{This is the region list for the `warm-up' example presented in \cite[Section 5.1]{KMVHMW1}.};

\begin{lstlisting}
sage: rlist = [
    [1,0,19,20,13,14],
    [2,1,14,15],
    [3,2,15,16],
    [4,3,16,17],
    [5,4,17,6],
    [7,6,1,2],
    [8,7,2,3],
    [9,8,3,4],
    [10,9,4,5],
    [11,10,5,0,25,24],
    [12,11,24,23],
    [13,12,23,22,11,12,21,22],
    [14,13,22,23],
    [15,14,23,24],
    [16,15,24,25],
    [17,16,25,18],
    [19,18,7,8],
    [20,19,8,9],
    [21,20,9,10],
    [22,21,10,11],
    [20,21,12,13],
    [0,1,6,17,18,19,0,5,6,7,18,25]]
\end{lstlisting}
\newpage
Next, initialize the Heegaard Diagram with the given regions list. 
\begin{lstlisting}
sage: G1 = HeegaardDiagram(rlist, 1, 'NotStein')
\end{lstlisting}
\noindent Here the second entry indicates the number of base-pointed regions while the last entry provides a name to be appended to any output file that would result from running functions belonging to the HeegaardDiagram class.

Once the Heegaard diagram has been initialized, the following variables from \verb|hf-hat| are readily available.
\begin{enumerate}
\item \verb|G1.boundary_intersections|
\item \verb|G1.regions|
\item \verb|G1.regions_un|
\item \verb|G1.intersections|    
\item \verb|G1.euler_measures_2|
\item \verb|G1.euler_measures_2_un|
\item \verb|G1.boundary_mat|
\item \verb|G1.boundary_mat_un|
\item \verb|G1.point_measures_4|
\item \verb|G1.point_measures_4_un|
\item \verb|G1.alphas|
\item \verb|G1.betas|
\item \verb|G1.intersections_on_alphas|
\item \verb|G1.intersections_on_betas|
\item \verb|G1.intersection_incidence|
\item \verb|G1.intersection_matrix|
\item \verb|G1.generators|
\item \verb|G1.generator_reps|
\end{enumerate}

\noindent Also readily available are the following variables new to \verb|hf-hat-obd|.
\begin{enumerate}
\setcounter{enumi}{19}
\item \verb|G1.contact_intersections| stores the list of distinguished intersection points that give the Honda-Kazez-Matic representative for the Ozsv\'ath-Szab\'o contact class.
\item \verb|G1.periodic_domains| stores the free abelian group of periodic domains.
\item \verb|G1.pd_basis| stores a basis for the free abelian group of periodic domains.
\item \verb|G1.b1| stores the first Betti number of the Heegaard Diagram. (Note that this is strictly bigger than the first Betti number of the underlying 3-manifold if the diagram has more than one basepoint.)
\item \verb|G1.QSpinC| stores a list whose i-th entry is the list of Chern numbers for the SpinC structure associated to the i-th generator
\end{enumerate}
The following variables will be initialized once one runs \verb|G1.sort_generators()|:
\begin{enumerate}
\setcounter{enumi}{25}
\item \verb|G1.SpinC_structures| stores SpinC structures realized by Heegaard Floer generators, as specified by their Chern numbers and distinguished integers.
\item \verb|G1.SpinC| stores a list whose i-th entry is the SpinC structure of the i-th generator.
\item \verb|G1.genSpinC| stores a list whose i-th entry is a list of generators living in the i-th SpinC structure.
\item \verb|G1.gradings| stores a list whose i-th entry is a dictionary assigning to a generator in the i-th SpinC structure its (relative) Maslov grading (modulo divisibility of the i-th SpinC structure as stored in \verb|G1.div|.)
\item \verb|G1.differentials| is a dictionary of Maslov index 1 positive domains between pairs of generators.
\end{enumerate}

\noindent To complete the initialization of the Heegaard Floer chain complex, run 
\begin{lstlisting}
sage: G1.print_differentials()
\end{lstlisting}
which also outputs a \verb|.txt| file (\verb|NotStein_possible_differentials.txt|) with all possible Heegaard Floer differentials in every SpinC structure and fills in the dictionary \verb|G1.pos_differentials| that associates to a pair of Heegaard Floer generators the list of positive Maslov index 1 domains, if any, between them.

Below are a list of functions that can be used to analyze the Heegaard Floer chain complex once the Heegaard diagram is initialized.
\begin{lstlisting}
sage: G1.find_pos_domains(i,j)
\end{lstlisting}
finds all positive domains between the \verb|i|-th and the \verb|j|-th generators.
\begin{lstlisting} 
sage: G1.domain_type(i,j,D)
\end{lstlisting}
returns the $J_+$, the Euler characteristic, the number of boundary components, and the genus of a Lipshitz curve associated to the domain D between the \verb|i|-th and the \verb|j|-th generators.
\begin{lstlisting} 
sage: G1.find_pos_diffs_from(i)
\end{lstlisting}
returns a list of positive Maslov index 1 domains from the \verb|i|-th generator along with the corresponding output generator.
\begin{lstlisting} 
sage: G1.find_pos_diffs_to(j)
\end{lstlisting}
returns a list of positive Maslov index 1 domains to the \verb|j|-th generator along with the corresponding input generator.
\begin{lstlisting} 
sage: G1.plot_complex(k) 
\end{lstlisting}
returns a directed graph where the vertices are the Heegaard Floer generators in the \verb|k|-th SpinC structure and the edges are all possible differentials, with blue indicating those with unique holomorphic representatives. 


\section{A tutorial for \texttt{hf-hat-obd-nice}}
\label{sec:hf-hat-obd-nice}
Download \verb|hf-hat-obd-nice.sage| from \cite{KMVHMW-github}. After running SageMath on your computer, load the program:
\begin{lstlisting}
sage: load('hf-hat-obd-nice.sage')
\end{lstlisting}
The primary input for the program is the same as it is for \verb|hf-hat-obd.sage|; namely, a list of regions cut out by the alpha and beta curves in the (multi-)pointed Heegaard diagram resulting from an open book and a collection of disjoint properly embedded arcs on a page of the open book. We require that the Heegaard diagram is nice. To produce a nice Heegaard diagram, use \verb|makenice.sage|, which can be downloaded from \cite{KMVHMW-github}. For example, the following regions list
\begin{lstlisting}
sage: rlist = [
    [1,0,5,6],
    [2,1,6,4,6,5],
    [3,2,5,4,1,2,8,7],
    [9,8,2,3,9,7],
    [3,0,8,9],
    [0,1,4,6,4,5,0,3,7,9,7,8]]
\end{lstlisting}
does not define a nice Heegaard diagram as it has two hexagonal regions and one octagonal region. To produce a nice Heegaard diagram out of this, load \verb|makenice.sage| via
\begin{lstlisting}
sage: load('makenice.sage')
\end{lstlisting}
and run
\begin{lstlisting} 
sage: M12 = HeegaardDiagram(rlist,1)
sage: M12.compute_distances()
sage: M12.make_nice()
sage: nicerlist = M12.nice_boundary_intersections
\end{lstlisting}
After initializing the nice Heegaard diagram via
\begin{lstlisting} 
sage: N12 = NiceHeegaardDiagram(nicerlist,1)
\end{lstlisting}
and running
\begin{lstlisting} 
sage: N12.sort_generators()
\end{lstlisting}
the \verb|N12| versions of the variables (1)--(30) from Section \ref{sec:hf-hat-obd} become available. Then we are ready to use the following functions.
\begin{lstlisting}
sage: N12.find_diffs(i,j)
\end{lstlisting}
finds all differentials between the \verb|i|-th and the \verb|j|-th generators.
\begin{lstlisting}
sage: N12.print_differentials(k)
\end{lstlisting}
outputs a \verb|.txt| file (\verb|N12_differentials_in_spinc_k.txt|) with all Heegaard Floer differentials in the \verb|k|-th SpinC structure.
\begin{lstlisting} 
sage: N12.plot_complex(k) 
\end{lstlisting}
returns a directed graph where vertices are the Heegaard Floer generators in the \verb|k|-th SpinC structure and edges are the Heegaard Floer differentials between them. 
\begin{lstlisting} 
sage: N12.compute_homology(k) 
\end{lstlisting}
returns the hat version of the Heegaard Floer homology of the 3-manifold described by the Heegaard diagram in the \verb|k|-th SpinC structure. 

If you want to compute the Ozsv\'ath--Szab\'o contact invariant and the spectral order of the open book diagram, it suffices to run
\begin{lstlisting} 
sage: N12.sort_canonical_spinc()
\end{lstlisting}
instead of \verb|N12.sort_generators()| so as to sort those generators in the canonical SpinC structure that reside in Maslov gradings $0$ and $1$ relative to the canonical generator $\x_\xi$. Either will populate the lists \verb|N12.cx0| and \verb|N12.cx1| containing those generators in (relative) Maslov gradings $0$ and $1$, respectively. These are then used by 
\begin{lstlisting} 
sage: N12.compute_bmaps()
\end{lstlisting}
to compute $\dhat$, $\del_0$, and $\del_1$ between (relative) Maslov gradings $1$ and $0$,
\begin{lstlisting} 
sage: N12.check_contact_class()
\end{lstlisting}
to check whether the Ozsv\'ath--Szab\'o contact invariant is zero, and
\begin{lstlisting} 
sage: N12.compute_order()
\end{lstlisting}
to compute the spectral order $\ordn(S,\phi,\arcs)$ of the open book diagram $(S,\phi,\arcs)$.
\newpage
\bibliography{biblio}

\providecommand{\bysame}{\leavevmode\hbox to3em{\hrulefill}\thinspace}
\providecommand{\MR}{\relax\ifhmode\unskip\space\fi MR }
\providecommand{\MRhref}[2]{%
  \href{http://www.ams.org/mathscinet-getitem?mr=#1}{#2}
}
\providecommand{\href}[2]{#2}
\begin{thebibliography}{KMVHMW23}

\bibitem[Ale20]{Alexander}
James~W. Alexander, \emph{Note on {R}iemann spaces}, Bull. Amer. Math. Soc.
  \textbf{26} (1920), no.~8, 370--372. \MR{1560318}

\bibitem[HKM09]{HKM}
Ko~Honda, William~H. Kazez, and Gordana Mati{\'c}, \emph{On the contact class
  in {H}eegaard {F}loer homology}, J. Differential Geom. \textbf{83} (2009),
  no.~2, 289--311.

\bibitem[KMVHMW]{KMVHMW-github}
\c{C}a\u{g}atay Kutluhan, Gordana Mati\'c, Jeremy Van Horn-Morris, and Andy
  Wand, \url{https://github.com/ckutluhan/hf-hat-obd}.

\bibitem[KMVHMW23]{KMVHMW1}
\bysame, \emph{Filtering the {H}eegaard {F}loer contact invariant}, Geom.
  Topol. \textbf{27} (2023), no.~6, 2181--2236.

\bibitem[Lip06]{Lipshitz}
Robert Lipshitz, \emph{A cylindrical reformulation of {H}eegaard {F}loer
  homology}, Geom. Topol. \textbf{10} (2006), 955--1097 (electronic).

\bibitem[Lip14]{Lipshitz2}
\bysame, \emph{Correction to the article: {A} cylindrical reformulation of
  {H}eegaard {F}loer homology}, Geom. Topol. \textbf{18} (2014), no.~1, 17--30.

\bibitem[OS04]{OzsvathSzabo}
Peter Ozsv{\'a}th and Zolt{\'a}n Szab{\'o}, \emph{Holomorphic disks and
  topological invariants for closed three-manifolds}, Ann. of Math. (2)
  \textbf{159} (2004), no.~3, 1027--1158.

\bibitem[OS05]{OzsvathSzabo4}
\bysame, \emph{Heegaard {F}loer homology and contact structures}, Duke Math. J.
  \textbf{129} (2005), no.~1, 39--61.

\bibitem[Pla07]{Plamenevskaya}
Olga Plamenevskaya, \emph{A combinatorial description of the {H}eegaard {F}loer
  contact invariant}, Algebr. Geom. Topol. \textbf{7} (2007), 1201--1209.

\bibitem[Sar]{Sarkar-github}
Sucharit Sarkar, \url{https://github.com/sucharit/hf-hat}.

\bibitem[SW10]{SarkarWang}
Sucharit Sarkar and Jiajun Wang, \emph{An algorithm for computing some
  {H}eegaard {F}loer homologies}, Ann. of Math. (2) \textbf{171} (2010), no.~2,
  1213--1236.

\bibitem[TW75]{TW}
W.~P. Thurston and H.~E. Winkelnkemper, \emph{On the existence of contact
  forms}, Proc. Amer. Math. Soc. \textbf{52} (1975), 345--347. \MR{375366}

\end{thebibliography}

\end{document}